\theoremstyle{plain}
\newtheorem{theorem}{Theorem}
\newtheorem{conjecture}{Conjecture}
\newtheorem{corollary}{Corollary}
\newtheorem{claim}{Claim}
\newtheorem{observation}{Observation}
\theoremstyle{remark}
\theoremstyle{definition}
\newcommand{\address}[1]{\def\@address{#1}}
\newcommand{\email}[1]{\def\@email{#1}}
\newcommand{\acknowledgment}[1]{\vspace{5mm}\singlespacing
	{\noindent\textbf{\textit{Acknowledgment\/}:} #1}
}
\title{Extremal graphs with maximum complementary second Zagreb index}
\author{Hui Gao}
\address{School of Mathematics, China University of Mining and Technology, Xuzhou, Jiangsu 221116, China }
\email{gaoh1118@yeah.net}
\date{\today}
\begin{document}

\maketitle

\begin{abstract}
Recently, a couple of degree-based topological indices, defined using a geometrical point of view of a graph edge, have attracted significant attention and being extensively investigated. Furtula and Oz [Complementary Topological Indices, \textit{MATCH Commun. Math. Comput. Chem.\/}
	\textbf{93} (2025) 247--263] introduced a novel approach for devising ``geometrical'' topological indices and focused special attention on the complementary second Zagreb index as a representation of the introduced approach. In the same paper, they also conjectured the maximal graphs  of order $n$ with the maximum complementary second Zagreb index. In this paper, we confirm their conjecture.
\end{abstract}

\onehalfspacing

\section{Introduction}

%  Quote references by using \cite. For example

All graphs, digraphs and mixed graphs in this paper are considered to be simple, that is they have no loops or parallel edges or arcs.

Let $F$ be a mixed graph with vertex set $V(F)$, edge set $E(F)$ and arc set $A(F)$. Since a graph or digraph can be seen as a special mixed graph with empty arc set or empty edge set respectively, the following definitions are claimed only for mixed graphs.

For $X \subseteq V(F)$, denote by $F[X]$ the mixed graph with vertex set $X$ and edge set and arc set consisting of edges and arcs in $F$ with both of their end-vertices in $X$ respectively. For $u \in V(F)$, denote by $d_{F}^{+}(u)$, $d_{F}^{-}(u)$ and $d_{F}(u)$ the number of arcs with $u$ as their tails, the number of arcs with $u$ as their heads and the total number of edges and arcs incident with $u$, respectively.
For an edge set $E_{0}$ and arc set $A_{0}$, denote by $ F\pm E_{0} $ ($F \pm A_{0}$)  the mixed graph by adding  to $F$ or deleting in $F$ all edges in $E_{0}$ (all arcs in $A_{0}$) respectively.
Let $G$ and $H$ be graphs. Denote by $\overline{G}$ the graph with vertex set $V(G)$ and edge set consisting of  pairs of nonadjacent vertices of $G$. Denote by $G \vee H$ the graph by starting with a disjoint union of two graphs $G$ and $H$ and adding edges joining every vertex of $G$ to every vertex of $H$.

Chemical  Graph  Theory  is  a  branch  of  Mathematical  Chemistry  which  has  an important  effect  on  the  development  of  Chemical  Sciences.  A  molecular  graph  or chemical  graph  is  a  simple  graph  such  that  its  vertices correspond  to the atoms and the edges  to  the  bonds.  In  Chemistry,  degree-based topological  indices  have  been  found  to  be  useful  in discrimination,   chemical   documentation,   structure   property   relationships,   structure   activity  relationships  and  pharmaceutical  drug  design.  There  has  been  considerable interest in the general problem of determining degree-based topological indices, see \cite{GP-86,K-18}.

Furtula and Oz \cite{FO-25} introduced a novel way to construct the indices using complement degree points: by substituting end-vertex degrees $d_{G}(u)$ and $d_{G}(v)$ with $d_{G}(u)+d_{G}(v)$ and $d_{G}(u)-d_{G}(v)$ in the definitions of the existing degree-based topological indices. Using this approach they got a whole new group of degree topological descriptors named as complementary topological indices.  Special attention was focused on the existing second Zagreb index $M_{2}(G)$ of a graph $G$, where
$$M_{2}(G):= \sum_{uv \in E(G)} d_{G}(u) d_{G}(v).$$
As a representation of the introduced approach, with a slight rectification, they  introduced the complementary second Zagreb index $cM_{2}(G)$:

$$cM_{2}(G)= \sum_{uv \in E(G)} |d_{G}(u)^{2}-d_{G}(v)^{2}|.$$

However, this index is not put forward here for the first time. It was introduced and reintroduced in several recent and unrelated papers, which
resulted in several names for this index, such as nano Zagreb index~\cite{JS-19}, minus-$F$ index~\cite{K-19}, modified Albertson index~\cite{YBA-20} and first Sombor index~\cite{G-22,ILJAF-22,TLD-23}.
Later, this index was investigated for some supramolecular chains~\cite{ILJAF-22}, and maximal trees and unicyclic graphs with some given parameters were determined in~\cite{L-23}. Additionally, in \cite{L-23} the correlations of this index with some physicochemical properties of octanes and benzenoid hydrocarbons were displayed.

Also, Furtula and Oz \cite{FO-25} conjectured the structure of the maximal graphs  of order $n$ with the maximum complementary second Zagreb index.

\begin{conjecture}[\cite{FO-25}]\label{Max-cM2}
The graph with $n$ vertices and the maximum complementary second Zagreb index is isomorphic to $K_{m} \vee \overline{K_{n-m}}$ for some $1 \leq m \leq n-1$.
\end{conjecture}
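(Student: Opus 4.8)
The plan is to take a graph $G$ on $n$ vertices with $cM_2(G)$ maximum and to identify it by a chain of local modifications, each of which does not decrease $cM_2$ and moves $G$ toward a complete split graph $K_m\vee\overline{K_{n-m}}$; to also obtain the uniqueness implicit in the statement, the modifications near the target will be shown to strictly increase $cM_2$ unless $G$ already has that form.

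The workhorse is an elementary inequality for four reals: among the three ways of partitioning $\alpha,\beta,\gamma,\delta$ into two pairs, the two ``crossing'' partitions (relative to the sorted order) give equal sums of absolute differences, and this common value is the largest, so
\[
\max\bigl(|\alpha-\gamma|+|\beta-\delta|,\ |\alpha-\delta|+|\beta-\gamma|\bigr)\ \ge\ |\alpha-\beta|+|\gamma-\delta|.
\]
With $(\alpha,\beta,\gamma,\delta)=\bigl(d_G(a)^2,d_G(b)^2,d_G(c)^2,d_G(d)^2\bigr)$ this governs degree-preserving $2$-switches: if $a,b,c,d$ are distinct with $ab,cd\in E(G)$ and $ac,bd\notin E(G)$, then swapping $ab,cd$ for $ac,bd$ leaves every degree --- hence every other edge's contribution --- unchanged, so by maximality the partition $\{ab,cd\}$ of the four squared degrees must be a ``crossing'' one. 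As containing such a configuration is exactly the obstruction to being a threshold graph, this is the route toward showing that an extremal $G$ has nested neighbourhoods. Since $2$-switches do not change the degree sequence, one supplements them with ``edge rotations'' --- delete $wx$ and add $ux$ for some $x\in N_G(w)\setminus(N_G(u)\cup\{u\})$, which alters only $d_G(u)$ and $d_G(w)$ --- to push the degree sequence upward in the majorization order, threshold sequences being exactly the majorization-maximal graphic sequences. Combining these tools I would argue that an extremal graph is a threshold graph.

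A threshold graph is recorded by its creation sequence $s_1\cdots s_n\in\{0,1\}^n$ (``$0$'': add an isolated vertex; ``$1$'': add a dominating vertex), and $G\cong K_m\vee\overline{K_{n-m}}$ exactly when this sequence is sorted as $0^{\,n-m}1^{\,m}$. The final family of moves transposes an adjacent inverted pair $(s_i,s_{i+1})=(1,0)$ into $(0,1)$: this affects only the two vertices created at steps $i$ and $i+1$ --- it shifts the multiset of their two degrees up by one and reroutes a bundle of edges between them --- and, using the nested structure to locate the degrees of the affected neighbours, a sign analysis of differences of squares should show it never decreases $cM_2$. Iterating sorts the creation sequence, so $G\cong K_m\vee\overline{K_{n-m}}$ for some $1\le m\le n-1$ (the values $m=0$ and $m=n$ give $cM_2=0$ and are ruled out once $n\ge 3$), which is the conjecture; the optimal $m$, namely the integer maximizing $m(n-m)\bigl((n-1)^2-m^2\bigr)$, then drops out of a one-variable optimization, though the statement does not ask for it.

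The principal obstacle is that $cM_2$, like the other Sombor-type ``geometric'' indices, is not monotone under adding or deleting an edge: a new edge raises both endpoints' degrees and may shrink the contribution of every other incident edge, so the familiar shortcut ``the extremal graph is edge-maximal, hence a join'' is unavailable, and each reduction must be a carefully balanced exchange. Checking that an exchange does not decrease $cM_2$ reduces to controlling the signs of several quantities of the form $|x^2-z^2|-|y^2-z^2|$ as $z$ runs through the degrees of neighbouring vertices, and organizing this casework --- especially for the edge rotations, where the incident contributions of three or four vertices change at once --- is where most of the effort goes. A second difficulty is the uniqueness asserted by ``is isomorphic to'': to rule out extremal graphs other than complete split graphs one must upgrade the relevant exchanges to strict increases away from the target, which requires handling the boundary cases ($m=1$, the presence of isolated or of universal vertices) separately.
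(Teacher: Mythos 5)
Your proposal is an outline rather than a proof, and the two steps on which everything rests are exactly the ones left unverified. The one complete argument you give, the $2$-switch inequality, is correct, but it yields only non-strict information with a substantial equality case (the non-crossing pairing ties with the crossing ones whenever the two middle squared degrees coincide), so it cannot by itself rule out alternating $4$-cycles in an extremal graph; it does not deliver ``extremal $\Rightarrow$ threshold''. To close that gap you lean on edge rotations, and there the difficulty is genuine: rotating $wx$ to $ux$ changes $d_G(u)$ and $d_G(w)$ and therefore the contribution $|d_G(y)^2-d_G(z)^2|$ of \emph{every} edge incident with $u$ or $w$, and it is not true in general that pushing the degree sequence upward in the majorization order increases $cM_2$ (the index is not a function of the degree sequence, let alone a Schur-convex one, so ``threshold sequences are majorization-maximal'' does not transfer). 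You acknowledge that this is where most of the effort goes but supply no mechanism for controlling those incident contributions; the same objection applies to the final claim that transposing $(1,0)\to(0,1)$ in the creation sequence never decreases $cM_2$, which is asserted with ``a sign analysis \dots should show'' and never carried out. Since these are precisely the points where the non-monotonicity you correctly identify as the principal obstacle bites, the proposal does not constitute a proof.

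For comparison, the paper's route avoids tracking neighbours' contributions altogether. It orients each edge from the endpoint of larger degree to the endpoint of smaller degree (leaving ties undirected), rewrites the index as a vertex sum $cM_{2}(G)=\sum_{v}(d_{F}^{+}(v)-d_{F}^{-}(v))d_{F}(v)^{2}$, and exploits the one-sided property that \emph{any} orientation of a modified graph gives a lower bound on its $cM_2$. A local modification then requires examining only the terms of the touched vertices, computed in a conveniently chosen (not necessarily optimal) orientation of the new graph; this is what makes the edge additions, deletions and reroutings in Operations A and B and Claims 1--6 tractable. Some device of this kind --- an exact or one-sided reformulation that localizes the effect of a degree change --- is what your plan is missing, and without it the rotation and transposition steps cannot be expected to go through by direct casework on $|x^2-z^2|-|y^2-z^2|$.
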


In this paper, we confirm their conjecture.

\begin{theorem}\label{maintheorem}
Conjecture~\ref{Max-cM2} is true.
\end{theorem}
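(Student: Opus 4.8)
The plan is to bound $cM_2(G)$ from above by a quantity depending only on the degree sequence of $G$, and then to solve the resulting optimisation over degree sequences, showing that its maximum is attained exactly at the degree sequences of complete split graphs. Throughout, label the vertices of $G$ as $v_1,\dots,v_n$ so that $d_1\ge\dots\ge d_n$, where $d_i:=d_G(v_i)$.

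First I would note that, since $d_i\ge d_j$ whenever $i<j$,
\[
cM_2(G)=\sum_{i<j,\ v_iv_j\in E(G)}(d_i^2-d_j^2),
\]
and group this sum by the larger endpoint $j$. Vertex $v_j$ has at most $\min(j-1,d_j)$ neighbours among $v_1,\dots,v_{j-1}$, and since $d_1^2\ge\dots\ge d_{j-1}^2\ge d_j^2$, its contribution is largest when those neighbours are $v_1,\dots,v_{\min(j-1,d_j)}$. Summing over $j$ gives
\begin{equation}\label{Xi}
cM_2(G)\ \le\ \Xi(d_1,\dots,d_n)\ :=\ \sum_{1\le i<j\le n,\ d_j\ge i}(d_i^2-d_j^2).
\end{equation}
A direct computation gives $\Xi\big(\underbrace{n-1,\dots,n-1}_{m},\underbrace{m,\dots,m}_{n-m}\big)=m(n-m)\big((n-1)^2-m^2\big)$, which is exactly $cM_2\big(K_m\vee\overline{K_{n-m}}\big)$; thus \eqref{Xi} is tight on complete split graphs.

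The core of the proof is the combinatorial statement: \emph{among all graphical sequences $n-1\ge d_1\ge\dots\ge d_n\ge0$, the function $\Xi$ is maximised only by sequences consisting of $m$ copies of $n-1$ followed by $n-m$ copies of $m$, for those $m$ attaining $M:=\max_{1\le m'\le n-1}m'(n-m')\big((n-1)^2-(m')^2\big)$.} I would prove this by a smoothing argument: starting from a maximiser whose degree sequence is not of this shape, produce an explicit perturbation — raising some entry toward $n-1$, lowering another, or merging two adjacent degree values — that keeps the sequence graphical and does not decrease $\Xi$, with strict increase unless the sequence is already extremal. Checking that graphicality is preserved is cleanest via the conjugate sequence $d^*_k:=|\{j:d_j\ge k\}|$ and the Erd\H{o}s--Gallai inequalities, since $\Xi$ also has the compact form $\Xi(d)=\sum_i d_i^2\max(0,d^*_i-i)-\sum_j d_j^2\min(j-1,d_j)$.

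Finally, assembling the pieces: since $K_m\vee\overline{K_{n-m}}$ realises $cM_2=M$ for an optimal $m$, we have $\max_G cM_2(G)=M$; and if $cM_2(G)=M$, then \eqref{Xi} together with the optimisation forces the degree sequence of $G$ to consist of $m$ copies of $n-1$ and $n-m$ copies of $m$ for some $m$ attaining $M$. A graph with $m$ vertices of degree $n-1$ (necessarily universal) and $n-m$ vertices of degree $m$ must have the latter forming an independent set, hence must be $K_m\vee\overline{K_{n-m}}$, which proves the theorem. I expect the degree-sequence optimisation to be the main obstacle: one must show every competing graphical sequence admits a $\Xi$-non-decreasing move toward a two-valued one, and keeping track of graphicality under these moves — particularly near the Durfee square, where the conjugate sequence is the binding constraint — is where the bookkeeping is heaviest.
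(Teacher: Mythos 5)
Your reduction to a degree-sequence optimisation is a genuinely different route from the paper (which orients each edge toward its lower-degree endpoint, partitions the vertices by the sign of $d_F^{+}-d_F^{-}$, and applies a sequence of local edge additions/deletions/rotations that strictly increase $cM_{2}$ unless the graph is already a complete split graph). Your upper bound $cM_{2}(G)\le\Xi(d_{1},\dots,d_{n})$ is correct --- for each $j$ the terms $d_i^2-d_j^2$ with $i<j$ are nonnegative and sorted, so replacing the actual back-neighbourhood of $v_j$ by $\{1,\dots,\min(j-1,d_j)\}$ can only increase the sum --- and the bound is indeed tight on $K_m\vee\overline{K_{n-m}}$.

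However, there is a genuine gap: the statement you label ``the core of the proof'' --- that $\Xi$, over all graphical sequences, is maximised only by the two-valued split sequences --- is asserted, not proved. Everything rests on it: if some graphical sequence had $\Xi(d)>M$, your chain of inequalities would say nothing, and the uniqueness part of the conjecture needs the full equality analysis of this optimisation. The ``smoothing argument'' you gesture at is not a routine verification. A single-entry perturbation changes the parity of $\sum d_i$, so every move must be a transfer between two entries while preserving the sorted order and the Erd\H{o}s--Gallai constraints; and $\Xi$ is neither Schur-convex nor monotone in any obvious sense, because lowering $d_j$ past a threshold $i$ deletes the nonnegative term $d_i^2-d_j^2$ while simultaneously enlarging all surviving terms $d_{i'}^2-d_j^2$, so the sign of the net change depends on the whole profile of the sequence. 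You acknowledge this is ``where the bookkeeping is heaviest,'' but that bookkeeping is essentially the entire content of the theorem; without it the argument is an outline, not a proof. If you want to pursue this route, I would suggest first proving the weaker inequality $\max_{d\ \mathrm{graphical}}\Xi(d)\le M$ directly (perhaps via the conjugate-sequence form of $\Xi$ and the Durfee-square decomposition you mention), and only then handling the equality case; alternatively, the paper's local-operation method avoids the degree-sequence optimisation entirely.
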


\section{Preliminaries}

Let $G=(V, E)$ be an undirected graph with $n$ vertices and $F$ be the mixed graph obtained by orienting some edges in $G$ as follows:

\noindent (i) $ \overrightarrow{uv} \in A(F) \text{~if~} uv \in E(G) \text{~and~} d_{G}(u)>d_{G}(v),\\$
(ii) $ uv \in E(F)  \text{~if~} uv \in E(G) \text{~and~} d_{G}(u)=d_{G}(v).$

\noindent Denote by $X$ and $Y$ the set of vertices satisfying $ d_{F}^{+}(u) \geq d_{F}^{-}(u)$ and $ d_{F}^{+}(u) < d_{F}^{-}(u)$ respectively.

In this section, we will show some basic properties of $cM_{2}(G)$ and graph operations that will increase $cM_{2}(G)$.

\begin{observation}\label{OB-1}
For any mixed graph $F'$ obtained by orienting some edges in $G$, we have
$$cM_{2}(G)\geq \sum_{\overrightarrow{uv} \in A(F')} (d_{F'}(u)^{2}-d_{F'}(v)^{2}).$$
If the mixed graph  $F'=F$, the equality holds.
\end{observation}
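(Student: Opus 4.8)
The plan is to exploit the fact that orienting an edge of $G$ leaves the total incidence degree unchanged. For any mixed graph $F'$ obtained by orienting some edges of $G$, every edge of $G$ either remains an undirected edge or becomes an arc, but in both cases it stays incident with the same two vertices. Hence $d_{F'}(u) = d_{G}(u)$ for every $u \in V(G)$, regardless of the chosen orientation. This single observation is the crux: it lets me replace every $d_{F'}$ by $d_{G}$ on the right-hand side and thereby reduce the whole statement to an edge-by-edge comparison against $cM_{2}(G)$.

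First I would rewrite the right-hand side as a sum over the edges of $G$. Each edge $uv \in E(G)$ contributes either $d_{G}(u)^{2} - d_{G}(v)^{2}$ to the sum (if it is oriented as the arc $\overrightarrow{uv}$ in $F'$) or $0$ (if it remains undirected, since only arcs appear in the sum). The same edge contributes $|d_{G}(u)^{2} - d_{G}(v)^{2}|$ to $cM_{2}(G)$. Because $|x| \geq x$ and $|x| \geq 0$ for every real $x$, the contribution of each edge to $cM_{2}(G)$ is at least its contribution to the right-hand side. Summing over all edges yields the inequality $cM_{2}(G) \geq \sum_{\overrightarrow{uv} \in A(F')}(d_{F'}(u)^{2} - d_{F'}(v)^{2})$.

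For the equality claim I would examine the specific orientation $F$ defined by rules (i) and (ii). In $F$, an edge $uv$ with $d_{G}(u) > d_{G}(v)$ becomes the arc $\overrightarrow{uv}$, whose contribution $d_{G}(u)^{2} - d_{G}(v)^{2}$ is positive and therefore equals $|d_{G}(u)^{2} - d_{G}(v)^{2}|$; an edge $uv$ with $d_{G}(u) = d_{G}(v)$ remains undirected and contributes $0$, which again equals $|d_{G}(u)^{2} - d_{G}(v)^{2}| = 0$. Thus every edge contributes exactly the same amount to both sides, so the two sums coincide and the equality holds.

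I do not anticipate a serious obstacle here; the only point requiring care is the degree-invariance identity $d_{F'}(u) = d_{G}(u)$, which is precisely what makes the right-hand side expressible purely in terms of the original graph $G$ and hence comparable term-by-term with $cM_{2}(G)$. Once that is in place, the inequality is immediate from the elementary bound $|x| \geq x$, and the equality case follows by observing that the orientation rule defining $F$ always directs each arc from the higher-degree endpoint to the lower-degree one.
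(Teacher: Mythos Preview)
Your proposal is correct and is precisely the intended justification; the paper states this as an observation without proof, and your argument---using $d_{F'}(u)=d_G(u)$ to reduce to an edge-by-edge comparison via $|x|\ge \pm x$ and $|x|\ge 0$---is the natural way to fill it in. The only cosmetic point is that an edge oriented as $\overrightarrow{vu}$ rather than $\overrightarrow{uv}$ contributes $d_G(v)^2-d_G(u)^2$, but this is still bounded above by $|d_G(u)^2-d_G(v)^2|$, so the inequality goes through unchanged.
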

\begin{observation}\label{OB-2}
For any mixed graph $F'$ obtained by orienting some edges in $G$, we have
\[
\begin{split}
cM_{2}(G)
&\geq \sum_{\overrightarrow{uv} \in A(F')} (d_{F'}(u)^{2}-d_{F'}(v)^{2}) ~~\text{(by Observation~\ref{OB-1})}\\
&= \sum_{v \in V(G)}(-\sum_{\overrightarrow{uv} \in A(F')}  d_{F'}(v)^{2} + \sum_{\overrightarrow{vw} \in A(F')}d_{F'}(v)^{2})\\
&= \sum_{v \in V(G)}(d_{F'}^{+}(v) - d_{F'}^{-}(v))d_{F'}(v)^{2}.
\end{split}
\]
If the mixed graph  $F'=F$, the equality holds.
\end{observation}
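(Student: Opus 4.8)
The plan is to read the statement as one inequality followed by two algebraic identities and to justify each link in turn. The opening inequality
\[
cM_{2}(G)\geq \sum_{\overrightarrow{uv}\in A(F')}\bigl(d_{F'}(u)^{2}-d_{F'}(v)^{2}\bigr)
\]
is exactly Observation~\ref{OB-1}, so I would simply cite it rather than reprove it; the assertion that equality holds when $F'=F$ is likewise inherited verbatim from Observation~\ref{OB-1}, since the two subsequent lines are honest equalities that preserve whatever holds for the arc-sum.

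The substantive content is therefore the combinatorial rearrangement of the arc-indexed sum into a vertex-indexed sum. First I would split each arc's summand into a contribution attached to its tail and one attached to its head,
\[
\sum_{\overrightarrow{uv}\in A(F')}\bigl(d_{F'}(u)^{2}-d_{F'}(v)^{2}\bigr)
=\sum_{\overrightarrow{uv}\in A(F')}d_{F'}(u)^{2}-\sum_{\overrightarrow{uv}\in A(F')}d_{F'}(v)^{2},
\]
and then interchange the order of summation, grouping all arcs by the single vertex that serves as their tail in the first sum and as their head in the second. This reindexing is precisely the middle line of the statement: for a fixed vertex $v$, the term $\sum_{\overrightarrow{vw}\in A(F')}d_{F'}(v)^{2}$ collects the positive contributions coming from arcs that leave $v$, while $\sum_{\overrightarrow{uv}\in A(F')}d_{F'}(v)^{2}$ collects the negative contributions from arcs that enter $v$.

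To reach the final line I would observe that within each inner sum the factor $d_{F'}(v)^{2}$ is constant, so each inner sum equals $d_{F'}(v)^{2}$ times the number of arcs of the relevant orientation incident with $v$. By the definitions of out-degree and in-degree these counts are $d_{F'}^{+}(v)$ and $d_{F'}^{-}(v)$, yielding the factor $(d_{F'}^{+}(v)-d_{F'}^{-}(v))\,d_{F'}(v)^{2}$ and completing the identity after summation over $v\in V(G)$. There is no genuine obstacle here, only the need for careful bookkeeping: each arc must be accounted for exactly once as a positive term at its tail and exactly once as a negative term at its head, and the sign attached to the in-arcs must not be dropped during the reindexing. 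Because both manipulations are identities, no slack is introduced, and the equality case $F'=F$ therefore survives unchanged from Observation~\ref{OB-1}.
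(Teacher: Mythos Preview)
Your proposal is correct and mirrors the paper's own treatment: the paper likewise presents Observation~\ref{OB-2} as a direct consequence of Observation~\ref{OB-1} followed by the same tail/head regrouping and the in-/out-degree count, with the equality case inherited from $F'=F$. There is nothing to add.
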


\noindent \textbf{Operation A:} For any $u, v \in X$, if $uv \notin E(G)$, then $cM_{2}(G+uv) \geq cM_{2}(G)$. If the equality holds, then $d_{F}(u)=d_{F}(v)$, $d_{F}^{+}(u)=d_{F}^{-}(u)$ and $d_{F}^{+}(v)=d_{F}^{-}(v)$.

\begin{proof}
Without loss of generality, suppose $d_{G}(u) \geq d_{G}(v)$. Add $\overrightarrow{uv}$ to $F$. Then for any vertex $v'$ in $G$ except $u$ and $v$, the degree, out-degree and in-degree of $v'$ in $F+\overrightarrow{uv}$ is the same as in $F$. Note that since $u, v \in X$, $d_{F}^{+}(u)\geq d_{F}^{-}(u)$ and $d_{F}^{+}(v)\geq d_{F}^{-}(v)$. Then we have
\[
\begin{split}
& cM_{2}(G+uv)-cM_{2}(G)\\
& \geq \sum_{v' \in V(G)}(d_{F+\overrightarrow{uv}}^{+}(v')-d_{F+\overrightarrow{uv}}^{-}(v'))d_{F+\overrightarrow{uv}}(v')^{2}\\
& ~~~-\sum_{v' \in V(G)}(d_{F}^{+}(v')-d_{F}^{-}(v'))d_{F}(v')^{2}~~~\text{(by Observation\ref{OB-2})}\\
& =(d_{F+\overrightarrow{uv}}^{+}(u)-d_{F+\overrightarrow{uv}}^{-}(u))d_{F+\overrightarrow{uv}}(u)^{2}+
(d_{F+\overrightarrow{uv}}^{+}(v)-d_{F+\overrightarrow{uv}}^{-}(v))d_{F+\overrightarrow{uv}}(v)^{2}\\
& ~~~-( (d_{F}^{+}(u)-d_{F}^{-}(u))d_{F}(u)^{2}+(d_{F}^{+}(v)-d_{F}^{-}(v))d_{F}(v)^{2} )\\
& =(d_{F}^{+}(u)+1-d_{F}^{-}(u))(d_{F}(u)+1)^{2}+(d_{F}^{+}(v)-d_{F}^{-}(v)-1)(d_{F}(v)+1)^{2}\\
& ~~~-( (d_{F}^{+}(u)-d_{F}^{-}(u))d_{F}(u)^{2}+(d_{F}^{+}(v)-d_{F}^{-}(v))d_{F}(v)^{2} )\\
& = (d_{F}^{+}(u)-d_{F}^{-}(u))(2d_{F}(u)+1)+(d_{F}^{+}(v)-d_{F}^{-}(v))(2d_{F}(v)+1)\\
& ~~~+(d_{F}(u)+1)^{2}-(d_{F}(v)+1)^{2}\\
& \geq 0~~\text{(since $d_{F}^{+}(u) \geq d_{F}^{-}(u)$, $d_{F}^{+}(v) \geq d_{F}^{-}(v)$ by  $u,v \in X$ and $d_{G}(u) \geq d_{G}(v)$)}.
\end{split}
\]
If $cM_{2}(G+uv)=cM_{2}(G)$, then the second ``$\geq$'' should be ``$=$'', which induces that $d_{F}(u)=d_{F}(v)$, $d_{F}^{+}(u)=d_{F}^{-}(u)$ and $d_{F}^{+}(v)=d_{F}^{-}(v)$.
\end{proof}

\noindent \textbf{Operation B:} For any $u, w \in V, v \in Y$, if $\overrightarrow{uv}, \overrightarrow{vw} \in A(F)$, $uw \notin E(G)$, then $cM_{2}(G-uv-vw+uw) > cM_{2}(G)$.

\begin{proof}
Notice that $G-uv-vw+uw $ is the underlying graph of the mixed graph $F-\overrightarrow{uv}-\overrightarrow{vw}+\overrightarrow{uw}$, and for any vertex $v'$ in $G$ except $v$, the degree, out-degree and in-degree of $v'$ in $F-\overrightarrow{uv}-\overrightarrow{vw}+\overrightarrow{uw}$ is same as in $F$. Hence we have
\[
\begin{split}
& cM_{2}(G-uv-vw+uw)-cM_{2}(G)\\
& \geq  (d_{F-\overrightarrow{uv}-\overrightarrow{vw}+\overrightarrow{uw}}^{+}(v)-d_{F-\overrightarrow{uv}-\overrightarrow{vw}+\overrightarrow{uw}}^{-}(v))
d_{F-\overrightarrow{uv}-\overrightarrow{vw}+\overrightarrow{uw}}(v)^{2}\\
& ~~~-(d_{F}^{+}(v)-d_{F}^{-}(v))d_{F}(v)^{2}~~~\text{(by Observation~\ref{OB-2}) }\\
& =((d_{F}^{+}(v)-1)-(d_{F}^{-}(v)-1))(d_{F}(v)-2)^{2}-(d_{F}^{+}(v)-d_{F}^{-}(v))d_{F}(v)^{2}\\
& =-4(d_{F}^{+}(v)-d_{F}^{-}(v))(d_{F}(v)-1)\\
& >0~~~\text{(since $d_{F}^{+}(v)<d_{F}^{-}(v)$ by $v \in Y$)}.
\end{split}
\]
\end{proof}

\section{Proof of Theorem~\ref{maintheorem}}

Suppose $G=(V, E)$ is an undirected graph with $n$ vertices and maximum complementary second Zagreb index. Let $F$ be the mixed graph as the last section stated. Then we will show that $G$ is isomorphic to $K_{m}\vee \overline{K_{n-m}}$ for some $1 \leq m \leq n-1$.

\begin{claim}\label{claim1}
For any $u, v \in X$, if $uv \notin E(G)$, then $d_{G}(u)=d_{G}(v)$ and  for any $w \in X \setminus \{u, v\}$, $d_{G}(w) \neq d_{G}(u)$.
\end{claim}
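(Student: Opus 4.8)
The plan is to derive both conclusions from Operation~A together with the maximality of $G$, the second one via a two-edge addition that strictly raises $cM_2$. For the first assertion: since $u,v\in X$ and $uv\notin E(G)$, Operation~A gives $cM_2(G+uv)\ge cM_2(G)$, and because $G+uv$ is again a graph on $n$ vertices, maximality of $G$ forces equality. The equality clause of Operation~A then delivers $d_F(u)=d_F(v)$ — equivalently $d_G(u)=d_G(v)$, as the orientation does not change degrees — along with $d_F^+(u)=d_F^-(u)$ and $d_F^+(v)=d_F^-(v)$. Writing $d:=d_G(u)$ and using $d_F(u)=d_F^+(u)+d_F^-(u)+e_u$, where $e_u$ is the number of undirected edges of $F$ at $u$, I read off that $d=2k$ with $k:=d_F^+(u)=d_F^-(u)$ and that $e_u=0$; since the undirected edges of $F$ are precisely the edges of $G$ joining equal-degree vertices, $u$ (and likewise $v$) has no neighbour of degree $d$ in $G$.

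For the second assertion I would argue by contradiction: assume $w\in X\setminus\{u,v\}$ with $d_G(w)=d=2k$. By the previous paragraph $u$ has no neighbour of degree $d$, so $uw\notin E(G)$; applying Operation~A to $u,w\in X$ and invoking maximality as before yields $d_F^+(w)=d_F^-(w)=k$ as well. Now set $G':=G+uv+uw$ (legitimate since $uv,uw\notin E(G)$) and orient both new edges out of $u$, i.e. take $F':=F+\overrightarrow{uv}+\overrightarrow{uw}$. Passing from $F$ to $F'$ changes only the summands of $u$, $v$, $w$ in Observation~\ref{OB-2}: that of $u$ moves from $0$ to $2(2k+2)^2$, and those of $v$ and $w$ each from $0$ to $-(2k+1)^2$. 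Hence
\[
cM_2(G')-cM_2(G)\ \ge\ 2(2k+2)^2-2(2k+1)^2\ =\ 2(4k+3)\ >\ 0,
\]
where the first inequality uses Observation~\ref{OB-2} for $G'$ with $F'$ together with its equality case for $G$ with $F$. This contradicts the maximality of $G$, so no such $w$ exists.

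The only real obstacle is the observation — easy once noticed — that the equality case of Operation~A forces $d_F^+(u)=d_F^-(u)$, and hence that $u$ has no same-degree neighbour in $G$; this is exactly what guarantees $uw\notin E(G)$ and lets one add the two edges $uv,uw$ simultaneously, after which the estimate via Observation~\ref{OB-2} is a one-line computation. I would also check the degenerate case $k=0$ (all of $u,v,w$ isolated), where the bound still reads $6>0$, so nothing special happens there.
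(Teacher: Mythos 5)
Your first paragraph and your treatment of the non-adjacent case match the paper's argument. But there is a genuine gap in how you dispose of the adjacent case.

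You claim that from the equality case of Operation~A, writing $d_F(u)=d_F^+(u)+d_F^-(u)+e_u$, you can ``read off'' that $e_u=0$, i.e.\ that $u$ has no neighbour of equal degree in $G$. That does not follow. The equality clause of Operation~A yields exactly three facts: $d_F(u)=d_F(v)$, $d_F^+(u)=d_F^-(u)$ and $d_F^+(v)=d_F^-(v)$. Substituting $d_F^+(u)=d_F^-(u)=k$ into your identity gives only $d=2k+e_u$, which places no constraint on $e_u$; nothing in the derivation of Operation~A (whose equality analysis forces the three bracketed terms to vanish and says nothing about undirected edges at $u$) rules out $e_u>0$. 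Indeed, ruling out equal-degree companions of $u$ inside $X$ is precisely what the second half of the claim is trying to establish, so you are implicitly assuming a piece of the conclusion. As a result, the case $w\in X\setminus\{u,v\}$ with $d_G(w)=d_G(u)$ and $uw\in E(G)$ (so that $uw$ is an \emph{undirected} edge of $F$) is never addressed: your two-edge addition $G+uv+uw$ is not even available there, since $uw$ is already an edge.

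The paper handles this as a separate case: when $uw\in E(F)$, it replaces the undirected edge $uw$ by the arc $\overrightarrow{uw}$ and simultaneously adds $\overrightarrow{uv}$, producing a mixed graph whose underlying graph is $G+uv$; Observation~\ref{OB-2} then gives
\[
cM_{2}(G+uv)-cM_{2}(G)\ \ge\ 2(d_{F}(u)+1)^{2}-(d_{F}(v)+1)^{2}-d_{F}(w)^{2}\ >\ 0
\]
since $d_G(u)=d_G(v)=d_G(w)$, again contradicting maximality. You need to add an argument of this kind (or some other mechanism) for the adjacent case; your computation for the non-adjacent case is otherwise fine, and the ``$2k$'' normalisation there is harmless since only $d_F(u)=d_F(v)=d_F(w)$ is actually used.
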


\begin{proof}
By Operation A, $cM_{2}(G+uv) \geq cM_{2}(G)$. Since $G$ has maximum complementary second Zagreb index, $cM_{2}(G+uv) = cM_{2}(G)$. Hence, by Operation A, $d_{G}(u) = d_{G}(v)$, $d_{F}^{+}(u)=d_{F}^{-}(u)$ and $d_{F}^{+}(v)=d_{F}^{-}(v)$.

Suppose to the contrary  that there exists $w \in X \setminus \{u,v\}$ such that $d_{G}(w)=d_{G}(u)$.

\textbf{Case 1:}If $uw \neq E(G)$, then by Operation A, $d_{F}^{+}(w)=d_{F}^{-}(w)$. Add $\overrightarrow{uv}$ and $\overrightarrow{uw}$ to $F$. Then we have
\[
\begin{split}
& cM_{2}(G+uv+uw)-cM_{2}(G)\\
& \geq \sum_{v'=u, v, w} (d_{F+ \overrightarrow{uv} +\overrightarrow{uw}}^{+}(v')-d_{F+ \overrightarrow{uv} +\overrightarrow{uw}}^{-}(v'))d_{F+ \overrightarrow{uv} +\overrightarrow{uw}}(v')^{2}\\
& ~~~-\sum_{v'=u, v, w} (d_{F}^{+}(v')-d_{F}^{-}(v'))d_{F}(v')^{2}~~~\text{(by Observation~\ref{OB-2})}\\
& =2(d_{F}(u)+2)^{2}-(d_{F}(v)+1)^{2}-(d_{F}(w)+1)^{2}\\
& ~~~\text{(since $d_{F}^{+}(u)=d_{F}^{-}(u)$ ,$d_{F}^{+}(v)=d_{F}^{-}(v)$ and $d_{F}^{+}(w)=d_{F}^{-}(w)$)}\\
& >0,
\end{split}
\]
contradicting that $G$ has maximum complementary second Zagreb index.

\textbf{Case 2:} If $uw \in E(G)$, since $d_{G}(u)=d_{G}(w)$, $uw \in E(F)$. Consider the mixed graph $F-uw+\overrightarrow{uw}+\overrightarrow{uv}$, whose underlying graph is $G+uv$.
\[
\begin{split}
& cM_{2}(G+uv)-cM_{2}(G)\\
& \geq \sum_{v'=u, v, w} (d_{F-uw+\overrightarrow{uw}+\overrightarrow{uv}}^{+}(v')-d_{F-uw+\overrightarrow{uw}+\overrightarrow{uv}}^{-}(v'))
d_{F-uw+\overrightarrow{uw}+\overrightarrow{uv}}(v')^{2}\\
& ~~~-\sum_{v'=u, v, w} (d_{F}^{+}(v')-d_{F}^{-}(v'))d_{F}(v')^{2}~~~\text{(by Observation~\ref{OB-2})}\\
& \geq 2(d_{F}(u)+1)^{2}-(d_{F}(v)+1)^{2}+(d_{F}^{+}(w)-d_{F}^{-}(w)-1)d_{F}(w)^{2}\\
& ~~~-(d_{F}^{+}(w)-d_{F}^{-}(w))d_{F}(w)^{2}~~~\text{(since $d_{F}^{+}(u)=d_{F}^{-}(u)$ and $d_{F}^{+}(v)=d_{F}^{-}(v)$) } \\
& =2(d_{F}(u)+1)^{2}-(d_{F}(v)+1)^{2}-d_{F}(w)^{2}\\
& >0~~~\text{(since $d_{G}(u)=d_{G}(v)=d_{G}(w))$},
\end{split}
\]
contradicting that $G$ has maximum complementary second Zagreb index.

Above all, for any $w \in X$, $d_{G}(w) \neq d_{G}(u)$.
\end{proof}

\begin{corollary}\label{cor-1}
For any $u \in X$, $d_{G[x]}(u) \geq |X|-2$.
\end{corollary}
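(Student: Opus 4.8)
The plan is to obtain the corollary directly from Claim~\ref{claim1}, which already packages the two facts about non-adjacent pairs inside $X$ that we need. The crucial observation is that Claim~\ref{claim1} forbids a vertex $u \in X$ from having \emph{two} non-neighbours in $X$: the ``degree-uniqueness'' clause applied to one non-neighbour contradicts the ``equal-degree'' clause applied to the other.

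Concretely, I would first dispose of the trivial range: if $|X| \leq 2$, then $|X|-2 \leq 0 \leq d_{G[X]}(u)$ and there is nothing to prove, so assume $|X| \geq 3$. Now suppose, for contradiction, that some $u \in X$ is non-adjacent in $G$ to two distinct vertices $v_{1}, v_{2} \in X \setminus \{u\}$. Applying Claim~\ref{claim1} to the pair $u, v_{1}$ yields $d_{G}(u) = d_{G}(v_{1})$ and, since $v_{2} \in X \setminus \{u, v_{1}\}$, also $d_{G}(v_{2}) \neq d_{G}(u)$. On the other hand, applying Claim~\ref{claim1} to the pair $u, v_{2}$ yields $d_{G}(u) = d_{G}(v_{2})$, a contradiction. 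Hence $u$ has at most one non-neighbour inside $X$, so it is adjacent to at least $(|X|-1)-1 = |X|-2$ vertices of $X$, i.e.\ $d_{G[X]}(u) \geq |X|-2$.

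I do not expect any genuine obstacle: the corollary is essentially a one-line consequence of Claim~\ref{claim1}. The only points requiring mild care are bookkeeping ones—ensuring $v_{1} \neq v_{2}$ and that both differ from $u$, so that the two invocations of Claim~\ref{claim1} are legitimate and $v_{2}$ really lies in $X \setminus \{u, v_{1}\}$—together with separating off the vacuous small-$|X|$ case.
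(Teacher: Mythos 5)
Your proof is correct and is essentially the same argument as the paper's: both deduce from Claim~\ref{claim1} that a vertex of $X$ cannot have two non-neighbours in $X$, by playing the equal-degree clause for one non-neighbour against the degree-uniqueness clause for the other. Your explicit treatment of the case $|X|\leq 2$ is a minor tidiness the paper omits, but the substance is identical.
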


\begin{proof}
Let $u \in X$. Suppose to the contrary that $d_{G[X]}(u) < |X|-2$. Then there exists $v \in X$ such that $uv \notin E(G)$. By Claim~\ref{claim1}, for any $w \in X\setminus \{u, v\}$, $d_{G}(w)\neq d_{G}(u)$; thus also by Claim~\ref{claim1}, we have $uw \in E(G)$. So $d_{G[X]}(u)=|X|-2$, a contradiction.
\end{proof}

\begin{claim}\label{claim2}
For any $u, v \in Y$, if $uv \in E(G)$, then $uv \notin E(F)$.
\end{claim}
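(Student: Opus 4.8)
The plan is to argue by contradiction. Suppose $uv \in E(F)$; by the construction of $F$ this forces $d_{G}(u) = d_{G}(v)$, and I write $d$ for this common value, noting $d \geq 1$ because $uv \in E(G)$. I will show that deleting the edge $uv$ yields a graph on $n$ vertices with strictly larger complementary second Zagreb index, contradicting the maximality of $G$.

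First I would put $G' := G - uv$ and take $F' := F - uv$ as an orientation of $G'$, i.e. keep every arc of $F$ and every remaining undirected edge of $F$ exactly as it is. Then for every vertex $w \notin \{u, v\}$ the quantities $d_{F'}^{+}(w)$, $d_{F'}^{-}(w)$, $d_{F'}(w)$ coincide with $d_{F}^{+}(w)$, $d_{F}^{-}(w)$, $d_{F}(w)$, while $d_{F'}^{+}(u) = d_{F}^{+}(u)$, $d_{F'}^{-}(u) = d_{F}^{-}(u)$, $d_{F'}(u) = d - 1$, and symmetrically at $v$. Applying Observation~\ref{OB-2} to $G'$ with the orientation $F'$, and subtracting the identity $cM_{2}(G) = \sum_{w \in V(G)}(d_{F}^{+}(w) - d_{F}^{-}(w))d_{F}(w)^{2}$ (the equality case of Observation~\ref{OB-2} for $G$ and $F$), every summand with $w \notin \{u,v\}$ cancels and there remains
\[
cM_{2}(G-uv) - cM_{2}(G) \;\geq\; \bigl((d-1)^{2}-d^{2}\bigr)\Bigl[(d_{F}^{+}(u)-d_{F}^{-}(u)) + (d_{F}^{+}(v)-d_{F}^{-}(v))\Bigr].
\]
Since $(d-1)^{2}-d^{2} = -(2d-1) < 0$ and, because $u,v \in Y$, both $d_{F}^{+}(u)-d_{F}^{-}(u) < 0$ and $d_{F}^{+}(v)-d_{F}^{-}(v) < 0$, the right-hand side is a product of two negative quantities and hence strictly positive. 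Thus $cM_{2}(G-uv) > cM_{2}(G)$, the desired contradiction.

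I do not expect a genuine obstacle here. The only point that needs care is that Observation~\ref{OB-2} is invoked for the graph $G-uv$ together with the orientation $F-uv$ (which is legitimately obtained by orienting some edges of $G-uv$), rather than for $G$ itself; once that is set up, the rest is the elementary identity $(d-1)^{2}-d^{2} = -(2d-1)$ together with the sign information that membership in $Y$ provides.
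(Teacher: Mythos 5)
Your proposal is correct and follows essentially the same route as the paper: delete the undirected edge $uv$, keep the orientation $F-uv$, apply Observation~\ref{OB-2}, and observe that only the total degrees of $u$ and $v$ change (each dropping by one) while their out- and in-degrees are unchanged, so the difference is $-(2d-1)$ times a negative quantity. The paper writes the two vertex contributions separately rather than factoring out $(d-1)^2-d^2$ using $d_F(u)=d_F(v)$, but the computation is identical.
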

\begin{proof}
Suppose to the contrary that $uv \in E(F)$ for some $u, v \in Y$. Note that since $u ,v \in Y$, $d_{F}^{+}(u)<d_{F}^{-}(u)$ and $d_{F}^{+}(v)<d_{F}^{-}(v)$. Then
\[
\begin{split}
& cM_{2}(G-uv)-cM_{2}(G)\\
& \geq (d_{F-uv}^{+}(u)-d_{F-uv}^{-}(u))d_{F-uv}(u)^{2}+(d_{F-uv}^{+}(v)-d_{F-uv}^{-}(v))d_{F-uv}(v)^{2}\\
& ~~~-( (d_{F}^{+}(u)-d_{F}^{-}(u))d_{F}(u)^{2}+(d_{F}^{+}(v)-d_{F}^{-}(v))d_{F}(v)^{2})~~~\text{(by Observation~\ref{OB-2})}\\
& =-( (d_{F}^{+}(u)-d_{F}^{-}(u))(2d_{F}(u)-1)-( (d_{F}^{+}(v)-d_{F}^{-}(v))(2d_{F}(v)-1)\\
& >0 ~~~\text{(since $d_{F}^{+}(u)<d_{F}^{-}(u)$ and $d_{F}^{+}(v)<d_{F}^{-}(v)$ by $u ,v \in Y$)},
\end{split}
\]
contradicting that $G$ has maximum complementary second Zagreb index.
\end{proof}

\begin{claim}\label{claim3}
Suppose $v \in Y$ and $ N_{F}^{-}(v) \subseteq X$. Then for any $u \in X$, if $uv \in E(G)$, then $\overrightarrow{uv} \in A(F)$.
\end{claim}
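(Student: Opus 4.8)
The plan is to argue by contradiction. Suppose $u\in X$, $uv\in E(G)$, but $\overrightarrow{uv}\notin A(F)$. By the orientation rule defining $F$ this forces $d_{G}(u)\le d_{G}(v)$, and splits into two sub-cases: either $uv\in E(F)$ with $d_{G}(u)=d_{G}(v)$, or $\overrightarrow{vu}\in A(F)$ with $d_{G}(u)<d_{G}(v)$. Since $v\in Y$ we have $d_{F}^{-}(v)\ge 1$, so we may fix an in-neighbour $w\in N_{F}^{-}(v)$; by hypothesis $w\in X$, and $\overrightarrow{wv}\in A(F)$ gives $d_{G}(w)>d_{G}(v)\ge d_{G}(u)$.

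First I would pin down the local structure. Since $u,w\in X$ and $d_{G}(w)\ne d_{G}(u)$, Claim~\ref{claim1} forces $uw\in E(G)$, and then $d_{G}(w)>d_{G}(u)$ gives $\overrightarrow{wu}\in A(F)$; running the same argument over all in-neighbours of $v$ yields $N_{F}^{-}(v)\subseteq N_{F}^{-}(u)$, hence $d_{F}^{-}(u)\ge d_{F}^{-}(v)\ge 1$. Next I would contradict the maximality of $G$ by producing a graph $G'$ on $V(G)$ with $cM_{2}(G')>cM_{2}(G)$, estimating $cM_{2}(G')$ from below by Observation~\ref{OB-2} applied to the orientation of $G'$ inherited from $F$, so that only the handful of vertices whose degrees or orientations change contribute to $cM_{2}(G')-cM_{2}(G)$. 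The first candidate is Operation~B: if $v$ has an out-neighbour $z$ with $wz\notin E(G)$, then the directed path $\overrightarrow{wv},\overrightarrow{vz}\in A(F)$ lets us replace $wv$ and $vz$ by $wz$ and strictly increase $cM_{2}$, and we are done. So we may assume that every in-neighbour of $v$ is adjacent to every out-neighbour of $v$; combined with Claim~\ref{claim1} and Corollary~\ref{cor-1}, this makes the configuration around $u$, $v$ and $N_{F}^{-}(v)$ very rigid. In that residual situation I would reroute the edge $uv$ itself---delete it and add an edge from $v$ to a carefully chosen non-neighbour of $v$, or, when $d_{G}(u)=d_{G}(v)$, exploit that $uv$ is undirected in $F$ to reorient and delete---and then verify that the net change is positive. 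The contribution of $v$ is always favourable because $v\in Y$ makes the coefficient $d_{F}^{+}(v)-d_{F}^{-}(v)$ strictly negative (the same phenomenon that powers Operation~B and Claim~\ref{claim2}); the contributions of the $X$-vertices $u$ and $w$ are the delicate ones, to be controlled using $d_{F}^{+}(\cdot)\ge d_{F}^{-}(\cdot)$ together with $d_{G}(w)>d_{G}(v)\ge d_{G}(u)$.

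The step I expect to be the main obstacle is the choice of the modification $G'$ in this residual case and the verification that $cM_{2}(G')>cM_{2}(G)$: deleting an edge at the high-degree vertex $w$ is expensive, so the compensating edge must be placed where its gain provably dominates, and I anticipate that the sub-case $d_{G}(u)=d_{G}(v)$ will need a slightly different modification than the sub-case $d_{G}(u)<d_{G}(v)$. Everything else---setting up the contradiction, locating $w$, and extracting the rigid local structure from Claim~\ref{claim1} and Corollary~\ref{cor-1}---should be routine bookkeeping.
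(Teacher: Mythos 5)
Your setup coincides with the paper's: argue by contradiction, split into the sub-cases $uv\in E(F)$ and $\overrightarrow{vu}\in A(F)$, and use Claim~\ref{claim1} to show every $w\in N_{F}^{-}(v)$ satisfies $\overrightarrow{wu}\in A(F)$, whence $d_{F}^{-}(u)\ge d_{F}^{-}(v)$ and consequently $d_{F}^{+}(u)+d_{F}^{-}(u)>d_{F}^{+}(v)+d_{F}^{-}(v)$. But the heart of the argument --- producing the actual contradiction in each sub-case --- is absent, and you yourself flag it as the anticipated obstacle. That is precisely the part that cannot be waved at: the modifications you gesture toward (Operation~B on an out-neighbour of $v$, or ``delete $uv$ and add an edge from $v$ to a carefully chosen non-neighbour'') are not the ones that work, and no computation is offered to show any of them increases $cM_{2}$. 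In particular, deleting an edge at $v$ and at the high-degree vertex $u\in X$ changes the contribution $(d_{F}^{+}(u)-d_{F}^{-}(u))d_{F}(u)^{2}$ in a direction you cannot control with the inequalities you have listed.

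The paper resolves the two sub-cases by two quite different and more specific devices, neither of which appears in your plan. For $uv\in E(F)$: since $d_{F}(u)=d_{F}(v)$ while $u$ carries strictly more arcs than $v$, the vertex $v$ must carry strictly more undirected edges than $u$, so there is an $x$ with $xv\in E(F)$ and $xu\notin E(G)$; one then passes to $F-uv+\overrightarrow{uv}+\overrightarrow{ux}$ (add the edge $ux$ and orient the old edge $uv$ away from $u$), and the gain $2(d_{F}(u)+1)^{2}$ at $u$ strictly dominates the losses $(d_{F}(x)+1)^{2}+d_{F}(v)^{2}$ because $d_{G}(u)=d_{G}(v)=d_{G}(x)$. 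For $\overrightarrow{vu}\in A(F)$: no modification is needed at all --- the first case plus Claim~\ref{claim2} shows $v$ is incident to no undirected edge, so $d_{F}(v)=d_{F}^{+}(v)+d_{F}^{-}(v)<d_{F}^{+}(u)+d_{F}^{-}(u)\le d_{F}(u)$, contradicting that $\overrightarrow{vu}\in A(F)$ forces $d_{G}(v)>d_{G}(u)$. Without these two ideas (the existence of the auxiliary vertex $x$ and the purely arithmetic disposal of the second sub-case) your proposal is a correct opening followed by a genuine gap where the proof should be.
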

\begin{proof}
Suppose to the contrary that $uv \in E(F)$ or $\overrightarrow{vu} \in A(F)$ for some $u \in X$.

For any $w  \in N_{F}^{-}(v)$, that is $\overrightarrow{wv} \in A(F)$, we can show that $\overrightarrow{wu} \in A(F)$, and thus $d^{-}_{F}(u) \geq d^{-}_{F}(v)$. In fact, since $\overrightarrow{wv} \in A(F)$, we have $d_{G}(w) > d_{G}(v)$; since $uv \in E(F)$ or $\overrightarrow{vu} \in A(F)$, we have $ d_{G}(v) \geq d_{G}(u) $ and thus $d_{G}(w) > d_{G}(u)$. Since $ N_{F}^{-}(v) \subseteq X$, we have $w \in X$; since $d_{G}(w) > d_{G}(u)$, by Claim~\ref{claim1}, we have $wu \in E(G)$ and thus $\overrightarrow{wu} \in A(F)$.

Since $u \in X$ and $v \in Y$, we have $ d^{+}_{F}(u) \geq d^{-}_{F}(u)$ and $d^{+}_{F}(v) < d^{-}_{F}(v) $. Note that $d^{-}_{F}(u) \geq d^{-}_{F}(v)$. Hence,  $d^{+}_{F}(u) \geq d^{-}_{F}(u)\geq  d^{-}_{F}(v) > d^{+}_{F}(v)$ and thus $d^{+}_{F}(u) + d^{-}_{F}(u) >  d^{-}_{F}(v) +d^{+}_{F}(v)$.

\textbf{Case 1: } $uv \in E(F)$.

Since $uv \in E(F)$, we have  $d_{F}(u)=d_{F}(v)$ and thus there exists $x \in V(G)$ such that $xv \in E(F)$ and $xu \notin E(F)$. Since $v \in Y$, by Claim~\ref{claim2}, we have $x \in X$; since $xu \notin E(F)$, or equivalently to say $xu \notin E(G)$, by Claim~\ref{claim1}, we have $d_{G}(x)=d_{G}(u)$.
\[
\begin{split}
& cM_{2}(G+ux)-cM_{2}(G)\\
& \geq \sum_{v'=u, x, v} (d_{F-uv+\overrightarrow{uv}+\overrightarrow{ux}}^{+}(v')-d_{F-uv+\overrightarrow{uv}+\overrightarrow{ux}}^{-}(v'))
d_{F-uv+\overrightarrow{uv}+\overrightarrow{ux}}(v')^{2}\\
& ~~~-\sum_{v'=u, x, v} (d_{F}^{+}(v')-d_{F}^{-}(v'))d_{F}(v')^{2}~~~\text{(by Observation~\ref{OB-2})}\\
& = 2(d_{F}(u)+1)^{2}-(d_{F}(x)+1)^{2}+(d_{F}^{+}(v)-d_{F}^{-}(v)-1)d_{F}(v)^{2}\\
& ~~~-(d_{F}^{+}(v)-d_{F}^{-}(v))d_{F}(v)^{2}\\
& ~~~\text{(since $d_{F}^{+}(u)=d_{F}^{-}(u)$ and $d_{F}^{+}(x)=d_{F}^{-}(x)$ by Operation A) } \\
& =2(d_{F}(u)+1)^{2}-(d_{F}(x)+1)^{2}-d_{F}(v)^{2}\\
& >0~~~\text{(since $d_{G}(u)=d_{G}(v)=d_{G}(x))$},
\end{split}
\]
contradicting that $G$ has maximum complementary second Zagreb index.

\textbf{Case 2:} $\overrightarrow{vu} \in A(F)$.

By Case 1, we know that there exists no vertex $w \in X$ such that $wv \in E(F)$; by Claim~\ref{claim2}, there exists no vertex $w \in Y$ such that $wv \in E(F)$. Hence, there exists no edge in $E(F)$ incident with $v$ and thus $d_{F}(v)=d_{F}^{+}(v)+d_{F}^{-}(v)$. Note that $d^{+}_{F}(u) + d^{-}_{F}(u) >  d^{-}_{F}(v) +d^{+}_{F}(v)$. Hence, $d_{F}(u)>d_{F}(v)$. However, since $\overrightarrow{vu} \in A(F)$, we have $d_{F}(v) >d_{F}(u)$, a contradiction.
\end{proof}

\begin{claim}\label{claim4}
$G[Y]$ is empty.
\end{claim}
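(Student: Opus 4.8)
The plan is to assume that \(G[Y]\) contains an edge and to exhibit a graph on \(V\) whose complementary second Zagreb index is strictly larger, contradicting the choice of \(G\). Recall that, \(G\) being extremal, Operation~B can never strictly increase \(cM_2\), and Claims~\ref{claim2} and~\ref{claim3} are available. By Claim~\ref{claim2}, every edge of \(G[Y]\) is an arc of \(F\). Among all arcs \(\overrightarrow{xy}\) of \(F\) with \(x,y\in Y\) I would fix one, \(\overrightarrow{vv'}\), whose tail \(v\) has the largest degree \(d_G(v)\). The first point is that \(N_F^{-}(v)\subseteq X\): an in-neighbour \(w\in Y\) of \(v\) would give an arc \(\overrightarrow{wv}\) with both ends in \(Y\) and \(d_G(w)>d_G(v)\), contradicting the maximal choice of \(v\). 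Moreover, since \(v\in Y\) and \(\overrightarrow{vv'}\in A(F)\), we have \(d_F^{-}(v)\ge d_F^{+}(v)+1\ge 2\), so \(N_F^{-}(v)\neq\emptyset\).

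Next I would pin down the local structure at \(v\). Applying Claim~\ref{claim3} to \(v\) (legitimate, as \(N_F^{-}(v)\subseteq X\)), every \(X\)-neighbour of \(v\) is joined to \(v\) by an arc directed into \(v\); hence \(v\) has no out-arc and no \(F\)-edge to \(X\), and by Claim~\ref{claim2} no \(F\)-edge to \(Y\). So \(v\) carries no \(F\)-edges, \(d_G(v)=d_F^{+}(v)+d_F^{-}(v)\), and every out-neighbour of \(v\) lies in \(Y\), i.e.\ \(N_F^{+}(v)\subseteq Y\) (here \(N_F^{+}(v)\) denotes the set of out-neighbours of \(v\)). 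Then I would exploit extremality through Operation~B with \(v\) as the middle vertex: for every \(u\in N_F^{-}(v)\) and every \(v''\in N_F^{+}(v)\) the edge \(uv''\) must already be present (otherwise replacing the edges \(uv, vv''\) by \(uv''\) strictly increases \(cM_2\)); since \(d_G(u)>d_G(v)>d_G(v'')\), this edge is the arc \(\overrightarrow{uv''}\in A(F)\). Therefore \(N_F^{-}(v)\cup\{v\}\subseteq N_F^{-}(v'')\), so \(d_F^{-}(v'')\ge d_F^{-}(v)+1\) for every \(v''\in N_F^{+}(v)\).

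The decisive move is then to delete from \(G\) all edges joining \(v\) to \(N_F^{+}(v)\), obtaining \(G'\), and to bound \(cM_2(G')\) from below via Observation~\ref{OB-2} applied to the orientation of \(G'\) obtained from \(F\) by dropping exactly those arcs. Writing \(p=d_F^{+}(v)\) and \(q=d_F^{-}(v)\), so that \(1\le p\le q-1\) and \(d_G(v)=p+q\), a short computation shows that the term of \(v\) changes by \(-q^{3}+(q-p)(p+q)^{2}\), while each of the \(p\) out-neighbours \(v''\) of \(v\)—which lies in \(Y\) and has \(d_G(v'')\ge d_F^{-}(v'')\ge q+1\)—has its term increase by at least \(d_G(v'')^{2}\ge (q+1)^{2}\); summing gives
\[
cM_2(G')-cM_2(G)\ \ge\ p\bigl(2q^{2}-p^{2}-pq+2q+1\bigr),
\]
and \(p\le q-1\) forces the bracket to be at least \(5q>0\). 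Hence \(cM_2(G')>cM_2(G)\), a contradiction, and \(G[Y]\) is empty.

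The main obstacle is precisely this last step. Deleting a single edge of \(G[Y]\) need not increase \(cM_2\) when the endpoints differ a lot in degree, because the higher-degree endpoint lies in \(Y\) and its contribution \((d_F^{+}-d_F^{-})d_F^{2}\) can move the wrong way; one must instead delete all out-edges of the carefully chosen vertex \(v\) at once. Making that bulk deletion provably profitable is what forces the preliminary work—choosing \(v\) of maximal tail-degree, deducing \(N_F^{-}(v)\subseteq X\) and thence (via Claim~\ref{claim3} and Claim~\ref{claim2}) that \(v\) has no \(F\)-edges and \(N_F^{+}(v)\subseteq Y\), and extracting the inequality \(d_F^{-}(v'')\ge d_F^{-}(v)+1\) for the out-neighbours from Operation~B—and carrying out these deductions in the right order is the crux of the argument.
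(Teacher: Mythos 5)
Your proof is correct and takes essentially the same route as the paper's: both select a maximal-degree tail of an arc inside $Y$, deduce $N_F^{-}(v)\subseteq X$ and (via Claims~\ref{claim2} and~\ref{claim3}) $N_F^{+}(v)\subseteq Y$, use Operation~B to get $d_F^{-}(v'')\ge d_F^{-}(v)+1$ for the out-neighbours, and then delete all out-edges of $v$ at once, estimating the change with Observation~\ref{OB-2}. Your final bound $p(2q^{2}-p^{2}-pq+2q+1)\ge 5pq$ is a slightly sharper version of the paper's $-p^{3}+pq^{2}>0$, but the argument is the same.
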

\begin{proof}
Suppose to the contrary that $G[Y]$ is not empty. By Claim~\ref{claim2}, $F[Y]$ is a nonempty digraph. Choose a vertex $u \in Y$ such that $d^{+}_{F[Y]}(u)>0$ and $d_{F}(u)$ is maximum. Then we can show that $N_{F}^{-}(u) \subseteq X$. Suppose to the contrary that there exists $w \in Y$ such that $\overrightarrow{wu} \in A(F)$. Then $ d_{F}(w) > d_{F}(u)$ and $d_{F[Y]}^{+}(w) >0$, a contradiction to the choice of $u$.

Since $N_{F}^{-}(u) \subseteq X$, by Claim~\ref{claim3}, we have $N_{F}^{+}(u) \subseteq Y$. Suppose $N_{F}^{+}(u)$ consists of $v_{1}, v_{2}, \ldots, v_{t}$. Denote $E:=\{uv_{1}, uv_{2}, \ldots, uv_{t} \}$ and $ \overrightarrow{E}:=\{\overrightarrow{uv_{1}}, \overrightarrow{uv_{2}}, \ldots, \overrightarrow{uv_{t}} \}$. For any $x \in N_{F}^{-}(u)$ and $v_{i} \in N_{F}^{+}(u)$, we can show that $xv_{i} \in E(G)$. Suppose to the contrary that $xv_{i} \notin E(G)$. Then by Operation B, we have $cM_{2}(G+xv_{i}) > cM_{2}(G)$, a contradiction. Thus $xv_{i} \in E(G)$, that is $\overrightarrow{xv_{i}} \in A(F)$. Hence, for any $v_{i} \in N_{F}^{+}(u)$, $N_{F}^{-}(v_{i}) \supseteq N_{F}^{-}(u) \cup \{ u \}$ and we have $d_{F}^{-}(v_{i})>d_{F}^{-}(u)$.
\[
\begin{split}
& cM_{2}(G-E)-cM_{2}(G)\\
& \geq \sum_{v'=u, v_{1}, \ldots, v_{t}}(d_{F-\overrightarrow{E}}^{+}(v')-d_{F-\overrightarrow{E}}^{-}(v'))d_{F-\overrightarrow{E}}(v')^{2}\\
& ~~~-\sum_{v'=u, v_{1}, \ldots, v_{t}}(d_{F}^{+}(v')-d_{F}^{-}(v'))d_{F}(v')^{2}~~~\text{(by Observation~\ref{OB-2})}\\
& =-d_{F}^{-}(u)^{3}+\sum_{i=1}^{t}(d_{F}^{+}(v_{i})-(d_{F}^{-}(v_{i})-1)) (d_{F}(v_{i})-1)^{2}\\
& ~~~-(d_{F}^{+}(u)-d_{F}^{-}(u))d_{F}(u)^{2}-\sum_{i=1}^{t}(d_{F}^{+}(v_{i})-d_{F}^{-}(v_{i}))d_{F}(v_{i})^{2}\\
& =-d_{F}^{+}(u)^{3}+(d_{F}^{-}(u)-d_{F}^{+}(u))d_{F}^{+}(u)d_{F}^{-}(u) \\
& ~~~ + \sum_{i=1}^{t}((d_{F}(v_{i})-1)^{2}+ (d_{F}^{-}(v_{i})-d_{F}^{+}(v_{i}))(2d_{F}(v_{i})-1))\\
& ~~~\text{(since $d_{F}(u)=d_{F}^{+}(u)+d_{F}^{-}(u)$)}\\
& > -d_{F}^{+}(u)^{3}+\sum_{i=1}^{t}(d_{F}(v_{i})-1)^{2}~\text{(since $d_{F}^{-}(v_{i})>d_{F}^{+}(v_{i}) $, $d_{F}^{-}(u)>d_{F}^{+}(u) $)}\\
& \geq -d_{F}^{+}(u)^{3}+\sum_{i=1}^{t}d_{F}^{-}(u)^{2}~~~\text{ (since $d_{F}(v_{i}) \geq d_{F}^{-}(v_{i})>d_{F}^{-}(u)$  )}\\
& =-d_{F}^{+}(u)^{3}+d_{F}^{+}(u)d_{F}^{-}(u)^{2}\\
& >0~~~\text{(since $d_{F}^{-}(u)>d_{F}^{+}(u) $)},
\end{split}
\]
a contradiction.
\end{proof}

\begin{corollary}\label{cor-2}
For any $u \in X$ and $v \in Y$, if $uv \in E(G)$, then $\overrightarrow{uv} \in A(F)$.
\end{corollary}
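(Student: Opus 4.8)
The plan is to obtain Corollary~\ref{cor-2} as an immediate consequence of Claim~\ref{claim4} and Claim~\ref{claim3}. Indeed, Claim~\ref{claim3} already delivers precisely the orientation statement we want, but only for those $v \in Y$ whose in-neighbourhood in $F$ is contained in $X$; so the whole task reduces to checking that \emph{every} vertex of $Y$ satisfies this hypothesis.

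First I would invoke Claim~\ref{claim4}, which asserts that $G[Y]$ is empty. Consequently, if $v \in Y$ then $v$ has no neighbour inside $Y$, and since $X$ and $Y$ partition $V$, every vertex adjacent to $v$ in $G$ lies in $X$. Because the in-neighbourhood $N_{F}^{-}(v)$ contains only vertices $w$ with $\overrightarrow{wv} \in A(F)$, hence only $w$ with $wv \in E(G)$, it follows that $N_{F}^{-}(v) \subseteq X$.

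Now fix $u \in X$ and $v \in Y$ with $uv \in E(G)$. By the previous step, $N_{F}^{-}(v) \subseteq X$, so Claim~\ref{claim3} applies to $v$ and gives $\overrightarrow{uv} \in A(F)$ for every $u \in X$ with $uv \in E(G)$; in particular this holds for the chosen $u$, which is exactly the assertion.

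There is essentially no obstacle: all the substantive work has been front-loaded into Claims~\ref{claim3} and~\ref{claim4}, and what remains is the purely formal observation that ``$G[Y]$ empty'' upgrades to ``$N_{F}^{-}(v) \subseteq X$ for every $v \in Y$'', which is forced by the way $F$ orients the edges of $G$. The one point worth making explicit is that $N_{F}^{-}(v)$ contains no vertex that fails to be a neighbour of $v$ in $G$, so emptiness of $G[Y]$ genuinely pushes the entire set $N_{F}^{-}(v)$ into $X$.
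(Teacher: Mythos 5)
Your proof is correct and follows exactly the paper's own route: Claim~\ref{claim4} forces $N_{F}^{-}(v) \subseteq X$ (since every in-neighbour in $F$ is a neighbour in $G$ and $G[Y]$ is empty), after which Claim~\ref{claim3} gives the orientation directly. The paper's proof is the same two-step argument, just stated more tersely.
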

\begin{proof}
Let $ u \in X$, $v \in Y$ with $uv \in E(G)$. By Claim~\ref{claim4}, $N_{F}^{-}(v) \subseteq X$; by Claim~\ref{claim3}, since $uv \in E(G)$, we have $\overrightarrow{uv} \in A(F)$.
\end{proof}

\begin{claim}\label{claim5}
For any $u, v \in X$, if $d_{G}(u) \geq d_{G}(v)$, then $N^{+}_{F}(u) \cap Y \supseteq N^{+}_{F}(v) \cap Y $.
\end{claim}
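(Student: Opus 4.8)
By Corollary~\ref{cor-2}, every edge of $G$ between $X$ and $Y$ is oriented from $X$ to $Y$ in $F$, so $N_{F}^{+}(x)\cap Y=N_{G}(x)\cap Y$ for each $x\in X$; it therefore suffices to prove $N_{G}(v)\cap Y\subseteq N_{G}(u)\cap Y$ whenever $u,v\in X$ with $d_{G}(u)\ge d_{G}(v)$. Suppose not and fix $w\in Y$ with $vw\in E(G)$, $uw\notin E(G)$; then $\overrightarrow{vw}\in A(F)$ by Corollary~\ref{cor-2} and $d_{G}(u)\ge d_{G}(v)>d_{G}(w)$. The plan is to contradict the maximality of $cM_{2}(G)$ via the switch $G':=G-vw+uw$, carried by the mixed graph $F':=F-\overrightarrow{vw}+\overrightarrow{uw}$ (whose underlying graph is $G'$). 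Only $u,v,w$ change their parameters, and since at $w$ exactly one in-arc is removed and one added, the contribution of $w$ is unchanged; writing $\delta_{x}:=d_{F}^{+}(x)-d_{F}^{-}(x)\ge 0$ for $x\in X$, Observation~\ref{OB-2} gives
\[
cM_{2}(G')-cM_{2}(G)\ \ge\ \delta_{u}\bigl(2d_{G}(u)+1\bigr)+\bigl(d_{G}(u)+1\bigr)^{2}-\delta_{v}\bigl(2d_{G}(v)-1\bigr)-\bigl(d_{G}(v)-1\bigr)^{2}.
\]

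If $uv\notin E(G)$, then Claim~\ref{claim1} forces $d_{G}(u)=d_{G}(v)$ and $\delta_{u}=\delta_{v}=0$, so the right-hand side equals $(d_{G}(u)+1)^{2}-(d_{G}(u)-1)^{2}=4d_{G}(u)>0$, a contradiction; hence $uv\in E(G)$. In this case I would first read off structure from Claim~\ref{claim1} and Corollary~\ref{cor-1}: if $d_{G}(u)=d_{G}(v)$ then neither $u$ nor $v$ has a non-neighbour in $X$ (such a vertex would share its degree with $u$ and $v$ by Claim~\ref{claim1}, contradicting the uniqueness clause there), so $u,v$ are adjacent to all of $X$ and $|N_{G}(u)\cap Y|=|N_{G}(v)\cap Y|$; if $d_{G}(u)>d_{G}(v)$ then $d_{G[X]}(x)\in\{|X|-2,|X|-1\}$ for each $x$ gives $|N_{G}(u)\cap Y|\ge d_{G}(u)-|X|+1\ge d_{G}(v)-|X|+2\ge|N_{G}(v)\cap Y|$. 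Either way $|N_{G}(u)\cap Y|\ge|N_{G}(v)\cap Y|$, so some compensating vertex $w'\in(N_{G}(u)\cap Y)\setminus N_{G}(v)$ exists.

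The displayed bound is wasteful because $F'$ is not the natural orientation of $G'$: when $d_{G}(v)$ falls by $1$, every edge from $v$ to a neighbour of degree exactly $d_{G}(v)$ should reverse (each gaining $2d_{G}(v)-1$ in the sum), and symmetrically for $u$ when $d_{G}(u)$ rises. I would put these reversals into $F'$, eliminate the count of equal-degree neighbours via $d_{G}(v)=d_{F}^{+}(v)+d_{F}^{-}(v)+\#\{\text{neighbours of }v\text{ of degree }d_{G}(v)\}$ (and likewise for $u$), and use that $u\in N_{G}(v)$ forces $d_{F}^{-}(v)\ge 1$ once $d_{G}(u)>d_{G}(v)$, to make the right-hand side manifestly positive. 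Where this still leaves a gap — essentially the sub-case $d_{G}(u)=d_{G}(v)$ with $uv\in E(G)$ — I would instead play $w'$ against $w$ through the degree-preserving double switch $G-vw-uw'+uw+vw'$, using Corollary~\ref{cor-1} and Claim~\ref{claim1} to control $d_{G}(w')$ relative to $d_{G}(v)$ and force a strict increase of $cM_{2}$. The main obstacle is precisely this $uv\in E(G)$ analysis: the single switch $vw\mapsto uw$ does not suffice when $v$ emits many low-degree arcs (so $\delta_{v}$ is large), and closing the argument requires the exact shape of $G[X]$ from Corollary~\ref{cor-1} — $K_{|X|}$ with a matching removed, each missing edge bearing its own degree value by Claim~\ref{claim1} — together with a carefully chosen auxiliary orientation rather than the crude $F'$. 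The reduction to set containment, the case $uv\notin E(G)$, and the bookkeeping with Observation~\ref{OB-2} are routine.
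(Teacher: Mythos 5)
Your setup is the same as the paper's: reduce to $N_{G}(v)\cap Y\subseteq N_{G}(u)\cap Y$ via Corollary~\ref{cor-2}, pick a bad $w$, and apply Observation~\ref{OB-2} to the single switch $F-\overrightarrow{vw}+\overrightarrow{uw}$; your displayed inequality is exactly the paper's intermediate expression. But you then declare the bound insufficient when $\delta_{v}$ is large and leave the case $uv\in E(G)$ (and more generally the control of $\delta_{v}$ versus $\delta_{u}$) as an unexecuted sketch of arc reversals and double switches. That is a genuine gap: the statement you actually need, and which the paper proves before running the switch, is that $d_{F}^{-}(u)\le d_{F}^{-}(v)$ and $d_{F}^{+}(u)\ge d_{F}^{+}(v)$, hence $\delta_{u}\ge\delta_{v}$. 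The first follows because every $w'\in N^{-}_{F}(u)$ lies in $X$ (Corollary~\ref{cor-2}) and has $d_{G}(w')>d_{G}(u)\ge d_{G}(v)$, so Claim~\ref{claim1} forces $w'v\in E(G)$ and $\overrightarrow{w'v}\in A(F)$, giving $N_{F}^{-}(u)\subseteq N_{F}^{-}(v)$; the second combines $N^{+}_{F[X]}(u)\supseteq N^{+}_{F[X]}(v)$ with a comparison of $|N_{F}^{+}(x)\cap Y|=d_{F}(x)-d_{F[X]}(x)$ using Corollary~\ref{cor-1} (and, when $d_{G}(u)=d_{G}(v)$, the observation that $d_{F[X]}(u)=d_{F[X]}(v)$ via Claim~\ref{claim1}). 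With $\delta_{u}\ge\delta_{v}\ge 0$ and $d_{F}(u)\ge d_{F}(v)$ one gets
\[
\delta_{u}(2d_{F}(u)+1)-\delta_{v}(2d_{F}(v)-1)\ \ge\ (\delta_{u}-\delta_{v})(2d_{F}(v)-1)\ \ge\ 0,
\]
and $(d_{F}(u)+1)^{2}-(d_{F}(v)-1)^{2}>0$, so the single switch is already strictly increasing in every case. Your worry that ``$v$ emits many low-degree arcs'' cannot occur, and none of the proposed auxiliary orientations or degree-preserving double switches are needed; since you neither prove $\delta_{u}\ge\delta_{v}$ nor carry out the alternative, the proposal does not close the argument.

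Your handling of the subcase $uv\notin E(G)$ (where Claim~\ref{claim1} gives $d_{G}(u)=d_{G}(v)$ and $\delta_{u}=\delta_{v}=0$) is correct, and your structural remarks about $G[X]$ are accurate but are only used in the paper as the input to the in-/out-degree comparison above, not as a springboard for a different switch.
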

\begin{proof}
First, we show that $d_{F}^{-}(u) \leq d_{F}^{-}(v)$. For any $w \in N^{-}_{F[X]}(u)$, that is $w \in X$ and $\overrightarrow{wu} \in A(F)$, we have $d_{G}(w) > d_{G}(u)$; since $d_{G}(u) \geq d_{G}(v)$, $ d_{G}(w)>d_{G}(v)$. By Claim~\ref{claim1}, $wv \in E(G)$ and thus $\overrightarrow{wv} \in A(F)$, that is $w \in N_{F[X]}^{-}(v)$. Hence $N_{F[X]}^{-}(u) \subseteq N_{F[X]}^{-}(v)$. Similarly, $N_{F[X]}^{+}(u) \supseteq N_{F[X]}^{+}(v)$. By Corollary~\ref{cor-2}, $N_{F[X]}^{-}(u)=N_{F}^{-}(u)$ and $N_{F[X]}^{-}(v)=N_{F}^{-}(v)$. Hence, $N_{F}^{-}(u) \subseteq N_{F}^{-}(v)$ and thus $d_{F}^{-}(u) \leq d_{F}^{-}(v)$.

Next, we show that $d_{F}^{+}(u) \geq d_{F}^{+}(v)$.  For any $x \in X$, $d_{F}^{+}(x)=|N_{F}^{+}(x) \cap Y|+|N_{F[X]}^{+}(x)|$. Since $|N_{F[X]}^{+}(u)| \geq  |N_{F[X]}^{+}(v)|$, we only need to prove $|N_{F}^{+}(u) \cap Y| \geq |N_{F}^{+}(v) \cap Y|$.  Note that $|N_{F}^{+}(x) \cap Y|=d_{F}(x)-d_{F[X]}(x)$.

If $d_{G}(u)>d_{G}(v)$, since $|X|-2  \leq d_{G[X]}(u), d_{G[X]}(v) \leq |X|-1$ by Corollary~\ref{cor-1}, we have $d_{F}(u)-d_{F[X]}(u) \geq d_{F}(v)+1-(|X|-1) \geq d_{F}(v)-d_{F[X]}(v)  $. If $d_{G}(u)=d_{G}(v)$, then we can show that $d_{F[X]}(u)=d_{F[X]}(v)$. In fact, suppose to the contrary that $ d_{F[X]}(u)= |X|-2 $ and $d_{F[X]}(v)=|X|-1$ without loss of generality. Let $w$ be the only vertex in $X$ nonadjacent to $u$. Since $d_{F[X]}(v)=|X|-1$, $v \neq w$; by Claim~\ref{claim1}, $d_{G}(v) \neq d_{G}(u)$, a contradiction.

Finally, we show $N^{+}_{F}(u) \cap Y \supseteq N^{+}_{F}(v) \cap Y $. Suppose to the contrary that there exists $ w \in N_{F}^{+}(v) \cap Y \setminus N_{F}^{+}(u) \cap Y$. Then
\[
\begin{split}
& cM_{2}(G-vw+uw)-cM_{2}(G)\\
& \geq \sum_{v'=u, v} (d_{F-\overrightarrow{vw}+\overrightarrow{uw}}^{+}(v')-d_{F-\overrightarrow{vw}+\overrightarrow{uw}}^{-}(v'))
d_{F-\overrightarrow{vw}+\overrightarrow{uw}}(v')^{2}\\
& ~~~-\sum_{v'=u, v}(d_{F}^{+}(v')-d_{F}^{-}(v'))d_{F}(v')^{2}~~~\text{(by Observation~\ref{OB-2})}\\
& =(d_{F}^{+}(u)+1-d_{F}^{-}(u))(d_{F}(u)+1)^{2}+(d_{F}^{+}(v)-1-d_{F}^{-}(v))(d_{F}(v)-1)^{2}\\
& ~~~-(d_{F}^{+}(u)-d_{F}^{-}(u))d_{F}(u)^{2}-(d_{F}^{+}(v)-d_{F}^{-}(v))d_{F}(v)^{2}\\
& =(d_{F}(u)+1)^{2}-(d_{F}(v)-1)^{2}+(d_{F}^{+}(u)-d_{F}^{-}(u))(2d_{F}(u)+1)\\
& ~~~-(d_{F}^{+}(v)-d_{F}^{-}(v))(2d_{F}(v)-1)\\
& >(d_{F}^{+}(u)-d_{F}^{+}(v)+d_{F}^{-}(v)-d_{F}^{-}(u))(2d_{F}(v)-1)~~~\text{(since $d_{F}(u) \geq d_{F}(v)$)}\\
& \geq 0~~~\text{(since $d_{F}^{+}(u) \geq d_{F}^{+}(v)$ and $d_{F}^{-}(u) \leq d_{F}^{-}(v)$),}
\end{split}
\]
a contradiction.
\end{proof}

\begin{claim}\label{claim6}
For any $u, v \in X$, $N^{+}_{F}(u) \cap Y = N^{+}_{F}(v) \cap Y $
\end{claim}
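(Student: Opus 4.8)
I would prove Claim~\ref{claim6} by contradiction, reading off the structure from Claim~\ref{claim5} and then exhibiting an edge move that strictly increases $cM_{2}$.

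Suppose the claim is false. By Claim~\ref{claim5} the sets $S_{x}:=N_{F}^{+}(x)\cap Y$ for $x\in X$ are linearly ordered by inclusion, and $d_{G}(x)=d_{G}(x')$ forces $S_{x}=S_{x'}$ (apply Claim~\ref{claim5} in both directions). Hence the smallest member $T:=\bigcap_{x\in X}S_{x}$ of this chain is a proper subset of $Y$. Put $X_{0}:=\{x\in X:S_{x}=T\}$. Choose $v\in X_{0}$, taking $v\in\{u_{0},v_{0}\}$ if the non-adjacent pair $u_{0},v_{0}$ provided by Claim~\ref{claim1} happens to lie in $X_{0}$, and choose $w\in Y\setminus T$; then $vw\notin E(G)$. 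By Claim~\ref{claim4} all neighbours of $w$ lie in $X$, and $w$ is not isolated (isolated vertices lie in $X$), so $w\in S_{x}$ for some $x$; in particular $X_{0}\subsetneq X$. Since $S_{v}=T$ is the minimal link, $v$ has no out-arc inside $X$ while almost every vertex of $X\setminus\{v\}$ is an in-neighbour of $v$ inside $X$; combined with $d_{F}^{+}(v)\ge d_{F}^{-}(v)$ (as $v\in X$) this gives $|T|\ge|X|-|X_{0}|$, or $|T|\ge|X|-2$ in the case $v\in\{u_{0},v_{0}\}$ — either way $T\ne\varnothing$. A vertex $w'\in T$ is adjacent to every vertex of $X$, so $d_{G}(w')=|X|$, whereas $w\notin T$ is non-adjacent to all of $X_{0}$, so $d_{G}(w)\le|X|-|X_{0}|$.

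The key step is the swap $G':=G-vw'+vw$ for such a $w'\in T$. By Corollary~\ref{cor-2}, $\overrightarrow{vw'}\in A(F)$; take $F':=F-\overrightarrow{vw'}+\overrightarrow{vw}$ as orientation of $G'$. Then the out- and in-degrees and the degree of $v$ are unchanged, $w'$ loses one in-arc and $w$ gains one, and by Claim~\ref{claim4} and Corollary~\ref{cor-2} we have $d_{F}^{+}(w')=d_{F}^{+}(w)=0$, $d_{F}^{-}(w')=d_{G}(w')$, $d_{F}^{-}(w)=d_{G}(w)$. Writing $r:=d_{G}(w')$ and $s:=d_{G}(w)$, Observation~\ref{OB-2} gives
\[
cM_{2}(G')-cM_{2}(G)\ \ge\ \bigl(r^{3}-(r-1)^{3}\bigr)-\bigl((s+1)^{3}-s^{3}\bigr)\ =\ 3(r+s)(r-s-1),
\]
so if $w$ can be chosen with $d_{G}(w')-d_{G}(w)\ge2$ then $cM_{2}(G')>cM_{2}(G)$, contradicting the maximality of $cM_{2}(G)$. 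By the bounds above this gap is at least $|X_{0}|$, hence at least $2$ as soon as $|X_{0}|\ge2$; when $|X_{0}|=1$ it is still at least $2$ provided some $w\in Y\setminus T$ is non-adjacent to at least two vertices of $X$, which holds unless the chain has exactly two levels. In the one remaining case — $|X_{0}|=1$ and exactly two levels, so $d_{G}(v)=|X|-1+|T|$ with $|T|\ge|X|-1$ while every $w\in Y\setminus T$ has $d_{G}(w)=|X|-1$ — I would instead apply $G\mapsto G+vw$: orienting the new edge $\overrightarrow{vw}$ and using $d_{F}^{+}(v)-d_{F}^{-}(v)\ge0$ together with $d_{F}(v)=|X|-1+|T|$, Observation~\ref{OB-2} yields $cM_{2}(G+vw)-cM_{2}(G)\ge|X|(|X|-1)>0$, again a contradiction. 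This exhausts all cases.

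The step I expect to be the real obstacle is the second paragraph: squeezing out of the rather weak information of the earlier claims that $T\ne\varnothing$ and that the degree gap $d_{G}(w')-d_{G}(w)$ is usually at least $2$, and checking that the exceptional non-edge $u_{0}v_{0}$ of Claim~\ref{claim1} never ruins these estimates — it only perturbs $d_{G[X]}$, hence $d_{F}^{+}$ and $d_{F}^{-}$, at the bottom of the chain by a bounded amount. Running the argument in the two cases $G[X]=K_{|X|}$ and $G[X]=K_{|X|}-u_{0}v_{0}$ side by side keeps this bookkeeping manageable.
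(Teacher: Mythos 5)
Your route is genuinely different from the paper's and substantially more complicated, and as written it has a real gap at the point you yourself flag as delicate. The critical step is showing $T=\bigcap_{x\in X}S_{x}\neq\varnothing$, which you derive from the assertion that your chosen $v\in X_{0}$ ``has no out-arc inside $X$''. But Claim~\ref{claim5} is only a one-way implication: $S_{z}=S_{v}=T$ does not force $d_{G}(z)=d_{G}(v)$, so $X_{0}$ may contain vertices of different degrees (they can differ in $d_{G[X]}$), and then an arbitrary $v\in X_{0}$ does have out-arcs inside $X$, namely to lower-degree members of $X_{0}$. In that case $d_{F}^{+}(v)=|T|+|N^{+}_{F[X]}(v)|$ and your inequality degrades to $|T|\ge |X|-2|X_{0}|+1$, which no longer yields $T\neq\varnothing$. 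The repair is to take $v$ of minimum degree in $X$ (such a $v$ automatically lies in $X_{0}$ and has no out-arcs inside $X$); your clause about forcing $v\in\{u_{0},v_{0}\}$ is both ill-defined (Claim~\ref{claim1} does not supply a canonical non-adjacent pair, and there may be none or several) and unnecessary. The closing case split ($|X_{0}|\ge2$ versus $|X_{0}|=1$ with two or with at least three levels) does appear to check out when worked through, but you assert rather than verify its key facts, e.g.\ that for a chain with at least three levels some $w\in Y\setminus T$ misses at least two vertices of $X$; this must be written out before the argument can be called complete.

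For comparison, the paper avoids the chain, the set $T$, and the case analysis entirely: it takes $w_{0}\in (N_{F}^{+}(u_{0})\setminus N_{F}^{+}(v_{0}))\cap Y$, shows via Claims~\ref{claim5} and~\ref{claim1} that every in-neighbour of $w_{0}$ is also an in-neighbour of $v_{0}$, hence $d_{F}^{-}(w_{0})\le d_{F}^{-}(v_{0})\le d_{F}^{+}(v_{0})$ and so $d_{F}(v_{0})\ge 2d_{F}^{-}(w_{0})$, and then simply adds the single edge $v_{0}w_{0}$; the resulting lower bound $(d_{F}(v_{0})+1)^{2}-(3d_{F}^{-}(w_{0})^{2}+3d_{F}^{-}(w_{0})+1)\ge d_{F}^{-}(w_{0})^{2}+d_{F}^{-}(w_{0})>0$ gives the contradiction in one stroke. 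Your swap-plus-fallback strategy can probably be completed, but only after fixing the choice of $v$ and supplying the missing verifications.
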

\begin{proof}
Suppose to the contrary that there exist $u_{0}, v_{0} \in X$ such that $ N^{+}_{F}(u_{0}) \cap Y \neq N^{+}_{F}(v_{0}) \cap Y$. By Claim~\ref{claim5}, suppose that $d_{G}(u_{0})>d_{G}(v_{0})$ and thus $ N_{F}^{+}(u_{0}) \cap Y \supsetneq N_{F}^{+}(v_{0}) \cap Y$ without loss of generality. Let $w_{0} \in (N_{F}^{+}(u_{0}) \setminus N_{F}^{+}(v_{0})) \cap Y$.

For any $x \in N^{-}_{F}(w_{0}) $, since $w_{0} \in  (N_{F}^{+}(x) \setminus N_{F}^{+}(v_{0}))  \cap Y$, we have $ N_{F}^{+}(x) \cap Y \nsubseteq N_{F}^{+}(v_{0}) \cap Y$. By Claim~\ref{claim5}, we have $ N_{F}^{+}(x) \cap Y \supsetneq N_{F}^{+}(v_{0}) \cap Y $ and $d_{G}(x)>d_{G}(v_{0})$. By Claim~\ref{claim1}, $xv_{0} \in E(G)$ and thus $\overrightarrow{xv_{0}} \in A(F)$, that is $x \in N_{F}^{-}(v_{0})$. Hence, $N^{-}_{F}(w_{0}) \subseteq N^{-}_{F}(v_{0})$ and $ d_{F}^{-}(w_{0}) \leq d_{F}^{-}(v_{0}) $. Since $ v_{0} \in X$, we have $d_{F}^{+}(v_{0}) \geq d_{F}^{-}(v_{0}) $ and thus $d_{F}(v_{0}) \geq 2d_{F}^{-}(w_{0})$.
\[
\begin{split}
& cM_{2}(G+v_{0}w_{0})-cM_{2}(G)\\
& \geq \sum_{v'= v_{0}, w_{0}} (d_{F+ \overrightarrow{v_{0}w_{0}}}^{+}(v')-d_{F+ \overrightarrow{v_{0}w_{0}}}^{-}(v'))d_{F+ \overrightarrow{v_{0}w_{0}}}(v')^{2}\\
& ~~~-\sum_{v'=v_{0}, w_{0}} (d_{F}^{+}(v')-d_{F}^{-}(v'))d_{F}(v')^{2}~~~\text{(by Observation~\ref{OB-2})}\\
& =(d_{F}^{+}(v_{0})+1-d_{F}^{-}(v_{0}))(d_{F}(v_{0})+1)^{2}+(-d_{F}^{-}(w_{0})-1)(d_{F}^{-}(w_{0})+1)^{2}\\
& ~~~-( (d_{F}^{+}(v_{0})-d_{F}^{-}(v_{0}))d_{F}(v_{0})^{2}+(-d_{F}^{-}(w_{0})^{3}) )\\
& ~~~\text{(since $ d_{F}^{+}(w_{0})=0$ by Claim~\ref{claim4} and Corollary~\ref{cor-2})}\\
& \geq (d_{F}(v_{0})+1)^{2}-(3d_{F}^{-}(w_{0})^{2}+3d_{F}^{-}(w_{0})+1)\\
& ~~~\text{(since $d_{F}^{+}(v_{0})\geq d_{F}^{-}(v_{0})$ by $v_{0} \in X$)}\\
& \geq (2d_{F}^{-}(w_{0})+1)^{2}-(3d_{F}^{-}(w_{0})^{2}+3d^{-}_{F}(w_{0})+1)~\text{(since $ d_{F}(v_{0}) \geq 2d^{-}_{F}(w_{0})$)}\\
& =d_{F}^{-}(w_{0})^{2}+d_{F}^{-}(w_{0})\\
& >0,
\end{split}
\]
a contradiction.
\end{proof}

\begin{corollary}\label{cor-3}
For any $u \in X$, $N^{+}_{F}(u) \supseteq Y$.
\end{corollary}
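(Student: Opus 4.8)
The plan is to read this off quickly from Claim~\ref{claim6}, Corollary~\ref{cor-2}, and Claim~\ref{claim4}. First I would note that $X$ is nonempty, since $\sum_{v \in V(G)} (d_{F}^{+}(v) - d_{F}^{-}(v)) = 0$ forces some vertex to satisfy $d_{F}^{+} \ge d_{F}^{-}$. By Claim~\ref{claim6}, the set $S := N_{F}^{+}(u) \cap Y$ is the same for every $u \in X$, so it suffices to prove $S = Y$, i.e. that no vertex of $Y$ lies outside $S$.

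Next I would take, for contradiction, a vertex $w \in Y \setminus S$. Because $S = N_{F}^{+}(u) \cap Y$ for \emph{every} $u \in X$, the vertex $w$ receives no arc from any vertex of $X$. Combining this with Corollary~\ref{cor-2} (which says every $X$--$Y$ edge of $G$ is oriented from $X$ to $Y$), the vertex $w$ has no neighbour in $X$; and by Claim~\ref{claim4} the subgraph $G[Y]$ is empty, so $w$ has no neighbour in $Y$ either. Hence $w$ is isolated in $G$, so $d_{F}^{+}(w) = d_{F}^{-}(w) = 0$.

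Finally, $w \in Y$ means $d_{F}^{+}(w) < d_{F}^{-}(w)$, which is incompatible with $d_{F}^{+}(w) = d_{F}^{-}(w) = 0$. This contradiction shows $Y \setminus S = \emptyset$, i.e. $Y = S \subseteq N_{F}^{+}(u)$ for every $u \in X$, as claimed. I do not anticipate any genuine obstacle here: the only step that requires a moment of care is the use of Claim~\ref{claim6} to upgrade ``$w \notin N_{F}^{+}(u)$'' to ``$w$ receives no arc from $X$ at all''; after that the argument is a short bookkeeping contradiction.
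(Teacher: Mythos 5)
Your proof is correct and is essentially the paper's argument in contrapositive form: both rest on the observation that a vertex of $Y$ has strictly positive in-degree, that by Claim~\ref{claim4} and Corollary~\ref{cor-2} any such incoming arc must originate in $X$, and that Claim~\ref{claim6} makes $N_F^+(u)\cap Y$ independent of $u\in X$. The paper argues directly (each $v\in Y$ lies in some $N_F^+(w)\cap Y$ and hence in all of them), while you derive a contradiction from a hypothetical $w\in Y$ missed by every $X$-vertex; the content is the same.
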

\begin{proof}
For any $v \in Y$, we have $d_{F}^{-}(v) > d_{F}^{+}(v) \geq 0$; by Claim~\ref{claim4} and Corollary~\ref{cor-2}, there exists $w \in X$ such that $\overrightarrow{wv} \in A(F)$, that is $ v \in N_{F}^{+}(w) \cap Y$. By Claim~\ref{claim6}, we have $N_{F}^{+}(w) \cap Y=N_{F}^{+}(u) \cap Y$ and thus $ v \in N_{F}^{+}(u) \cap Y$. Hence, $Y \subseteq N_{F}^{+}(u) \cap Y$, that is $Y \subseteq N_{F}^{+}(u)$.
\end{proof}

By Corollary~\ref{cor-1}, denote $X_{1}:=\{u \in X: d_{G[X]}(u)=|X|-1 \}$ and $X_{2}:=\{u \in X: d_{G[X]}(u)=|X|-2 \}$. By Corollary~\ref{cor-3}, we have $d_{G}(u)=|X|+|Y|-1$ for $u \in X_{1}$ and $d_{G}(u)=|X|+|Y|-2$ for $u \in X_{2}$. Next, we will show that $X_{2} = \emptyset$ and thus $G$ is isomorphic to $K_{|X|} \vee \overline{K_{n-|X|}}$, completing the proof.

Suppose to the contrary that $X_{2} \neq \emptyset$. Let $u \in X_{2}$, that is $d_{G[X]}(u)=|X|-2$. Let $v \in X$ be the only nonadjacent vertex to $u$ in $G[X]$. Then $v \in X_{2}$ and by Corollary~\ref{cor-1}, for any $w \in X \setminus \{ u, v\}$, we have $d_{G}(w) \neq d_{G}(u)$ and thus $w \in X_{1}$. Hence, $X_{2}=\{u, v \}$ and $G[X]+uv$ is a complete graph. By Corollary~\ref{cor-3}, $N_{F}^{+}(u)=Y$. Since $u, v \in X$ and $uv \notin E(G)$, by Operation A, we have $d_{F}^{-}(u)=d_{F}^{+}(u)$, that is $ |X_{1}|=|Y|$. So $|X|=|Y|+2$. Note that $G+uv$ is isomorphic to $K_{|X|} \vee \overline{K_{|Y|}}$ and
\[
\begin{split}
& cM_{2}(K_{|X|} \vee \overline{K_{|Y|}} )-cM_{2}(G)\\
& = cM_{2}(G+uv) -cM_{2}(G) \\
& \geq cM_{2}(F+uv)-cM_{2}(F)\\
& =\sum_{v'=u,v}(d_{F+uv}^{+}(v')-d_{F+uv}^{-}(v'))(d_{F}(v'))^{2}-\sum_{v'=u,v}(d_{F}^{+}(v')-d_{F}^{-}(v'))(d_{F}(v'))^{2}\\
& =0.
\end{split}
\]
Since $|X|=|Y|+2$, we have
$cM_{2}(K_{|Y|} \vee \overline{K_{|X|}})=|X||Y|((n-1)^{2}-|Y|^{2})>cM_{2}(K_{|X|} \vee \overline{K_{|Y|}})=|X||Y|((n-1)^{2}-|X|^{2})$,
a contradiction.

\acknowledgment{This work was supported by the National Natural Science Foundation of China (No. 12201623) and the Natural Science Foundation of Jiangsu Province (No. BK20221105)}

% References should be ordered alphabetically.
\singlespacing

\end{document}